\newcommand{\p}{\partial}
\newcommand{\norm}[1]{\| {#1} \|}
\newcommand{\abs}[1]{\left| {#1} \right|}
\newcommand{\Keywords}[1]{\vspace{0.2in}\par\noindent {\bf Keywords :} {#1}}
\newcommand{\R}{\mathbb{R}}
\newcommand{\N}{\mathbb{N}}
\newcommand{\expectation}{\operatorname{E}}
\newcommand{\probab}{\operatorname{P}}
\newtheorem{theorem}{Theorem}
\newtheorem{lemma}{Lemma}
\date{}
\title{On weak convergence of stochastic heat equation with colored noise\footnote{This research was supported in part by the NSFs grant DMS-1307470}}
\author{Pavel Bezdek\footnote{Department of Mathematics, University of Utah, Salt Lake City, UT 84112-0090; E-mail: bezdek@math.utah.edu}}
\begin{document}

\maketitle

\begin{abstract}
In this work we are going to show weak convergence of a probability measure corresponding  to the solution of the following nonlinear stochastic heat equation $\frac{\p}{\p t} u_{t}(x) = \frac{\kappa}{2} \Delta u_{ t}(x) + \sigma(u_{t}(x))\eta_\alpha$ with colored noise $\eta_\alpha$ to the measure corresponding to the solution of the same equation but with white noise $\eta$ as $\alpha \uparrow 1$ on the space of continuous functions with compact support. The noise $\eta_\alpha$ is assumed to be colored in space and its covariance is given by $\expectation \left[ \eta_\alpha(t,x) \eta_\alpha(s,y) \right] = \delta(t-s) f_\alpha(x-y)$ where $f_\alpha$ is the Riesz kernel $f_\alpha(x) \propto 1/\abs{x}^\alpha$.  We will also state a result about continuity of measure in $\alpha$, for $\alpha \in (0,1)$. We will work with the classical notion of weak convergence of measures.
\Keywords{The Stochastic heat equation, colored noise, Riesz kernel}
\end{abstract}

\section{Introduction}

Throughout this work we will consider the following one-dimensional heat equation

\begin{align}\label{eq:heateqncolornoise}
\frac{\p}{\p t} u_{\alpha,t}(x) = \frac{\kappa}{2} \Delta u_{\alpha, t}(x) + \sigma(u_{\alpha,t}(x))\eta_\alpha~,~x\in \R~,~ t\geq 0,
\end{align}
with colored noise $\eta_\alpha$ having a particular covarinace structure
\begin{align}\label{eq:noisecolor}
&\expectation \left[ \eta_\alpha(t,x) \eta_\alpha(s,y) \right] = \delta(t-s) f_\alpha(x-y),
\end{align}
where
\begin{align}
f_\alpha(x) &= c_{1-\alpha} g_\alpha(x) = \hat{g}_{1-\alpha}(x)~~,~~g_\alpha(x) = \frac{1}{\abs{x}^\alpha}~\text{ for }~\alpha \in (0,1), \label{eq:covariancefunction}
\end{align}
and the constant $c_\alpha$ is 
\begin{align}
c_\alpha &= 2\frac{\sin\left( \frac{\alpha\pi}{2}\right) \Gamma(1-\alpha)}{\left( 2 \pi \right)^{1-\alpha}}.
\end{align}
The initial condition $u_{\alpha,0}(x)$ is taken to be bounded and measurable. We will also assume $\sigma$ to be Lipschitz continuous with Lipschitz constant $K$. Stochastic PDEs such as \eqref{eq:heateqncolornoise} have been studied in \cite{dalangmainsrc,ripplsturm,tudorcolorpaper,muellertribe,davarchaoticstochastic} and others. 

The function $f_\alpha$ can be thought of as an \emph{`approximation'} to the delta function in the following special sense, we know that one-dimensional Fourier transform of $g_{1-\alpha}$ , denoted by $\hat{g}_{1-\alpha}$, is equal to $f_\alpha$. We also know that the Fourier transform of a constant is $\delta$ distribution. Observe that $g_{1-\alpha}$ converges pointwise to $1$ as $\alpha \uparrow 1$. We will study the solution of \eqref{eq:heateqncolornoise}  as a function of $\alpha$. This arises noticeably in \cite[Sec. 7]{balanconus} where the authors have shown that $L^2(P)$ norm of $u_{\alpha,t}(x)$ converges to $L^2(P)$ norm of the solution to \eqref{eq:heateqnwhitenoise} as $\alpha \uparrow 1$ for every $t>0,x\in \R$ and $\sigma(x) = x$.

 The main question that has motivated this work, is whether the solution of \eqref{eq:heateqncolornoise} converges [\emph{in the appropriate sense}] to the solution of the same equation, but with white noise $\eta$ instead of colored noise $\eta_\alpha$ as $\alpha \uparrow 1$. By that we mean, the solution to 
 \begin{align}\label{eq:heateqnwhitenoise}
\frac{\p}{\p t} u_t(x) &= \frac{\kappa}{2} \Delta u_t(x) + \sigma(u_t(x))\eta~,~x\in \R~,~ t\geq 0,\\ 
u_0(x) &= u_{\alpha,0} \text{ bounded and measurable},
\end{align}
where $\eta$ denotes white noise.  We will state the main theorem in terms of measures corresponding to solutions. Let $\mathcal{C} =\mathcal{C}([0,T]\times [-N,N])$ for any fixed compact set $[0,T]\times [-N,N] \subset \R^+_0 \times \R$. Denote $P_\alpha$, the measure corresponding to $u_\alpha$,
\begin{align*}
\probab_\alpha( A) :=\begin{cases}
 \probab\left\{u_\alpha \in A \right\} ~&\text{ for } ~ \alpha \in (0,1),\\
 \probab\left\{u \in A \right\} ~&\text{ for } ~ \alpha = 1,
 \end{cases}
\end{align*}
for any Borel set $A$ of space $\mathcal{C}$. Here is the main theorem:

\begin{theorem}\label{thm:mainthm}
Measure $\probab_\alpha$ is continuous in $\alpha$, for $\alpha\in (0,1]$. We precisely mean that
$\probab_\alpha$ converges weakly to $\probab_1$ as $\alpha \uparrow 1$
 and 
$\probab_\alpha$ converges weakly to $\probab_{\alpha_0}$ as $\alpha \rightarrow \alpha_0$ for any $\alpha_0 \in (0,1)$.
\end{theorem}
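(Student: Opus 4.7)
The strategy is the classical two-step route to weak convergence of laws on $\mathcal{C} = \mathcal{C}([0,T]\times [-N,N])$: first prove tightness of the family $\{\probab_\alpha\}_{\alpha\in(0,1]}$, then pin down the limit by a coupling argument. Throughout, I would work with the mild (Walsh) formulation
\begin{align*}
u_{\alpha,t}(x) = (G_t \ast u_{\alpha,0})(x) + \int_0^t\!\!\int_\R G_{t-s}(x-y)\, \sigma(u_{\alpha,s}(y))\, \eta_\alpha(ds,dy),
\end{align*}
where $G_t$ is the one-dimensional heat kernel, and handle covariance integrals in the Fourier domain via $\widehat{f_\alpha}(\xi)\propto \abs{\xi}^{-(1-\alpha)}$ and Parseval.

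For tightness, I would first obtain uniform $L^p$ bounds on $u_{\alpha,t}(x)$ by Burkholder--Davis--Gundy combined with Gronwall applied to the second-moment kernel
\begin{align*}
\int_0^t\!\!\int_\R\!\int_\R G_{t-s}(x-y)\,G_{t-s}(x-z)\,f_\alpha(y-z)\,dy\,dz\,ds,
\end{align*}
which, by Parseval, is monotone in $\alpha$ and bounded above by the white-noise counterpart (finite in dimension one). The same machinery then gives Kolmogorov--Chentsov-type estimates
\begin{align*}
\expectation \abs{u_{\alpha,t}(x)-u_{\alpha,s}(y)}^{2p}\le C_p\bigl(\abs{t-s}^{\beta_1 p}+\abs{x-y}^{\beta_2 p}\bigr),
\end{align*}
with $C_p$ uniform in $\alpha$ in a neighborhood of the target value. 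This yields tightness on $\mathcal{C}$.

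To identify the limit, I would couple every $\eta_\alpha$ (and $\eta$) to a single space-time white noise $W$ by setting $\eta_\alpha = J_\alpha \ast W$, where $\widehat{J_\alpha}^2=\widehat{f_\alpha}$, so that $\widehat{J_\alpha}\to 1$ pointwise as $\alpha\uparrow 1$. On this common space the difference $u_{\alpha,t}(x)-u_{1,t}(x)$ splits into a Lipschitz-in-$\sigma$ piece (handled by Gronwall) plus the ``driving discrepancy''
\begin{align*}
\int_0^t\!\!\int_\R G_{t-s}(x-y)\,\sigma(u_{1,s}(y))\,(\eta_\alpha-\eta)(ds,dy),
\end{align*}
whose second moment, again by Parseval, is controlled by an integral of $\abs{\widehat{J_\alpha}(\xi)-1}^2\, e^{-\kappa s\abs{\xi}^2}$ which vanishes as $\alpha\uparrow 1$ by dominated convergence. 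Gronwall then gives $\sup_{(t,x)\in[0,T]\times[-N,N]}\expectation\abs{u_{\alpha,t}(x)-u_{1,t}(x)}^2\to 0$, which determines all finite-dimensional distributions of the limit and forces it to equal $\probab_1$. The argument for continuity at an interior $\alpha_0\in(0,1)$ is easier because $f_\alpha\to f_{\alpha_0}$ pointwise and $f_{\alpha_0}$ is a bona fide covariance, so no singular limit appears.

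The main obstacle is uniformity in $\alpha$ as $\alpha\uparrow 1$: since $f_\alpha$ tends to a delta distribution and $\widehat{f_\alpha}$ loses mass to infinity in frequency, any bound that treats $\alpha$ only qualitatively will degenerate. The delicate analytic work is therefore to verify that the Fourier-side dominated/monotone convergence arguments really do give constants independent of $\alpha$ in both the Kolmogorov increment bounds and the driving-discrepancy estimate, so that tightness and Gronwall survive the passage to $\alpha=1$.
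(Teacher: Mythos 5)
Your proposal follows essentially the same route as the paper: your coupling $\eta_\alpha = J_\alpha * W$ with $\widehat{J_\alpha}^{\,2}=\widehat{f_\alpha}$ is exactly the paper's $h_\alpha=\hat{g}_{(1-\alpha)/2}$, your driving-discrepancy kernel $\abs{\widehat{J_\alpha}(\xi)-1}^2$ is the paper's $(g_{1-\alpha}-2g_{\frac{1-\alpha}{2}}+1)(\xi)$ handled by dominated convergence, and your Gronwall/weighted-norm contraction together with uniform-in-$\alpha$ Kolmogorov increment bounds mirrors the paper's $\mathcal{N}_{\gamma,k}$ Picard-iteration argument and its tightness estimates. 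One minor inaccuracy that does not affect the conclusion: $\widehat{f_\alpha}(\xi)=\abs{\xi}^{-(1-\alpha)}$ is neither monotone in $\alpha$ nor dominated by the white-noise counterpart at low frequencies $\abs{\xi}<1$, but that region contributes an amount bounded uniformly for $\alpha\in(\alpha_0,1)$, which is exactly what the paper's Lemma \ref{lem:rieszkernelgaussdistrib} (yielding the $s^{-\alpha/2}$ bound) makes precise.
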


The notion of weak convergence in Theorem \ref{thm:mainthm} is the classical one \cite{billingsleyconvergence}. Theorem \ref{thm:mainthm} gives us a new way of thinking about the stochastic heat equation with white noise. Instead of studying the solution to \eqref{eq:heateqnwhitenoise} we can study the solution to \eqref{eq:heateqncolornoise} for $\alpha \approx 1$. Also note, that the noise with Riesz kernel spatial covariance produces noise which is less regular. We like to think that this \emph{`roughness'} better captures properties of the stochastic heat equation with white noise.

 Before we begin the proof of Theorem \ref{thm:mainthm}, let us recall that [\emph{mild}] solutions to \eqref{eq:heateqncolornoise} and \eqref{eq:heateqnwhitenoise} are interpreted as solutions of the following integral equations \cite{davarminicourse}

\begin{align*}
u_{\alpha,t} (y) &=(u_{\alpha,0}*p_{t})(y) +\int_0^t \int_\R p_{t-s}(x-y)\sigma(u_{\alpha,s}(x)) \eta_\alpha(ds,dx),\\
u_t (y) &=(u_0*p_{t})(y) +\int_0^t \int_\R p_{t-s}(x-y)\sigma(u_s(x)) \eta(ds,dx),
\end{align*}
where $p_t$ is the heat kernel
\begin{align*}
p_t(x) = \frac{1}{\sqrt{2 \pi \kappa t}} \exp\left( - \frac{ x^2}{2\kappa t} \right) .
\end{align*}

\section{Proof of Theorem \ref{thm:mainthm}} \label{sec:mainsectionwithproof}

We will only show the first part of the theorem, $\probab_\alpha$ converges weakly to $\probab_1$ as $\alpha \uparrow 1$. This is the worst case scenario. The second statement of Theorem \ref{thm:mainthm} follows almost directly from the proof in this section.

 The proof of the upcoming Theorem \ref{thm:nnorm} uses coupling, which allows us to put both noises $\eta_\alpha$ and $\eta$ on the same probability space. This idea was introduced in \cite{davarchaoticstochastic} and lets us write our noise $\eta_\alpha$, for every $\alpha \in (0,1)$ in terms of one white noise $\eta$ with covariance 
\begin{align*}
\expectation \left[ \eta(t,x) \eta(s,y) \right] = \delta(t-s) \delta(x-y).
\end{align*}
The idea of coupling, or smoothing the noise in the spatial variable is not new. Authors in \cite{bertinicancrini} smoothed the noise in the spatial variable by an infinitely differentiable function with compact support. They have showed that this kind of smoothing converges to the heat equation with white noise  as our smoothing function converges to $\delta$ distribution. By coupling we mean that the martingale measure $\eta_\alpha$ will be defined as
\begin{align*}
\eta_\alpha([0,t] \times A) = \int_0^t \int_\R (\mathbbm{1}_A * h_\alpha)(x) \eta(ds,dx)\\
\end{align*}
and $f_\alpha$ in \eqref{eq:noisecolor} will be of the form 
\begin{align*}
f_\alpha (x) = (h_\alpha * h_\alpha)(x),
\end{align*}
where convolution is understood as convolution of generalized functions. Function $h_\alpha$ must be

\begin{align*}
h_\alpha (x) = c_{\frac{1-\alpha}{2} } g_{\frac{1+\alpha}{2} } (x) = \hat{g}_{\frac{1-\alpha}{2} }(x),
\end{align*}
since 
\begin{align*}
g_{1-\alpha}(\xi) = g_{\frac{1-\alpha}{2}}(\xi)\cdot g_{\frac{1-\alpha}{2}}(\xi) .
\end{align*}
One might notice that $h_\alpha\notin L^2(\R)$, but $\eta_\alpha$ is a well defined martingale measure, we refer the reader to \cite{davarchaoticstochastic} for more details. Before we state the main theorem of this section, let us state a technical lemma.

\begin{lemma} \label{lem:rieszkernelgaussdistrib}\cite[3.478]{tableofintegrals}\label{eq:boundwithgammafun}
The following equality holds for $s>0$ and $\gamma \in [0,1)$
\begin{align}\label{eq:negativemoment}
\int_\R \abs{x}^{-\gamma} e^{-s 4 \pi^2 x^2} dx = \left( \frac{1}{s 4\pi^2}\right)^{-(\gamma-1)/2}\Gamma(-\gamma/2+1/2).
\end{align}
\end{lemma}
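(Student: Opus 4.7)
The plan is to prove the identity by a change of variables that reduces the integral to the Gamma function representation $\Gamma(z) = \int_0^\infty v^{z-1} e^{-v}\,dv$. Since the integrand is even, I would first write
\begin{equation*}
\int_\R |x|^{-\gamma} e^{-s 4\pi^2 x^2}\,dx = 2 \int_0^\infty x^{-\gamma} e^{-s 4\pi^2 x^2}\,dx,
\end{equation*}
which is a legitimate manipulation since $\gamma \in [0,1)$ guarantees integrability near the origin (the singularity $x^{-\gamma}$ is integrable on $[0,1]$), while the Gaussian decay handles the tail.

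Next I would perform the substitution $v = 4\pi^2 s\, x^2$, so that $x = (4\pi^2 s)^{-1/2} v^{1/2}$ and $dx = \tfrac{1}{2}(4\pi^2 s)^{-1/2} v^{-1/2}\,dv$. Substituting gives
\begin{equation*}
2 \int_0^\infty (4\pi^2 s)^{\gamma/2} v^{-\gamma/2} e^{-v} \cdot \tfrac{1}{2}(4\pi^2 s)^{-1/2} v^{-1/2}\,dv = (4\pi^2 s)^{(\gamma-1)/2} \int_0^\infty v^{-\gamma/2 - 1/2} e^{-v}\,dv.
\end{equation*}
The remaining integral is exactly $\Gamma(1/2 - \gamma/2)$ by the definition of the Gamma function, and this is finite precisely because $1/2 - \gamma/2 > 0$, i.e., $\gamma < 1$. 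Rewriting the prefactor as $\bigl(1/(4\pi^2 s)\bigr)^{-(\gamma-1)/2}$ yields the stated identity.

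There is no real obstacle here; the lemma is a textbook computation (the citation to Gradshteyn--Ryzhik 3.478 already signals this). The only thing to watch is ensuring that the range $\gamma \in [0,1)$ is respected so that both the original integral and the resulting $\Gamma$ value are finite; the boundary case $\gamma = 1$ would produce a logarithmic divergence at the origin and correspondingly a pole of $\Gamma$ at $0$, so the restriction is sharp.
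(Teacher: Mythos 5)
Your proof is correct: the paper itself offers no proof at all, citing the lemma directly to Gradshteyn--Ryzhik 3.478, so your substitution $v = 4\pi^2 s\,x^2$ reducing the integral to $\Gamma\bigl(\tfrac{1}{2}-\tfrac{\gamma}{2}\bigr)$ supplies the standard derivation behind that table entry, and the exponent bookkeeping $(4\pi^2 s)^{(\gamma-1)/2} = \bigl(1/(4\pi^2 s)\bigr)^{-(\gamma-1)/2}$ matches the stated formula exactly. Your closing observation that $\gamma \in [0,1)$ is sharp (logarithmic divergence at the origin matching the pole of $\Gamma$ at $0$ when $\gamma = 1$) is a correct and worthwhile addition beyond what the paper records.
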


In the rest of this section, we will prove the following main theorem
\begin{theorem}\label{thm:nnorm}
For every $k\geq 2$ we can find $\gamma$ such that 
\begin{align*}
\lim_{\alpha \uparrow 1} \mathcal{N}_{\gamma,k}(u_\alpha - u) = 0,
\end{align*}
where $\mathcal{N}_{\gamma,k}$ is the following norm \cite{davarcbms}

\begin{align*}
\mathcal{N}_{\gamma,k}(u) = \sup_{t\in [0,T]} \sup_{x\in \R} \left( e^{-\gamma t} \norm{u_t}_{L^k(P)}\right)~~,~~k\geq 2.
\end{align*}
\end{theorem}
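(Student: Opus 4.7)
My plan is a Gronwall/self-bound argument on the weighted norm $\mathcal{N}_{\gamma,k}$, exploiting the coupling $\eta_\alpha([0,t]\times A)=\int_0^t\int_\R(\mathbbm{1}_A*h_\alpha)(x)\,\eta(ds,dx)$ to put $u_\alpha$ and $u$ on a common probability space. Since the initial conditions coincide, the deterministic heat parts cancel in the mild formulation, and I write
\[
u_{\alpha,t}(y)-u_t(y)=I_\alpha(t,y)+II_\alpha(t,y),
\]
where the Lipschitz piece $I_\alpha=\int_0^t\int p_{t-s}(x-y)[\sigma(u_{\alpha,s}(x))-\sigma(u_s(x))]\,\eta_\alpha(ds,dx)$ couples $u_\alpha$ back to $u$ through $\sigma$, and the approximation piece $II_\alpha=\int_0^t\int p_{t-s}(x-y)\sigma(u_s(x))\,[\eta_\alpha-\eta](ds,dx)$ depends only on $u$ and on the noise discrepancy.

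For $I_\alpha$ I would apply the Burkholder-type inequality for integrals against $\eta_\alpha$ (as in \cite{davarcbms}), bound the Lipschitz increment by $Ke^{\gamma s}\mathcal{N}_{\gamma,k}(u_\alpha-u)$, and convert the remaining deterministic double integral via Plancherel into $\int|\hat{p}_{t-s}(\xi)|^2\,g_{1-\alpha}(\xi)\,d\xi$; Lemma~\ref{lem:rieszkernelgaussdistrib} evaluates this to $(4\pi^2\kappa(t-s))^{-\alpha/2}\Gamma(\alpha/2)$. After multiplying by $e^{-2\gamma t}$, the $s$-integral becomes $O(\gamma^{\alpha/2-1})$, which can be made arbitrarily small by choosing $\gamma$ large, uniformly for $\alpha$ close to $1$. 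For $II_\alpha$, the coupling rewrites its integrand as $\phi_s*h_\alpha-\phi_s$ with $\phi_s(x)=p_{t-s}(x-y)\sigma(u_s(x))$, so $II_\alpha$ becomes a white-noise stochastic integral, and Walsh's isometry plus Plancherel in $x$ gives
\[
E|II_\alpha|^2=\int_0^t\!\!\int E[|\widehat{\phi_s}(\xi)|^2]\bigl(|\xi|^{-(1-\alpha)/2}-1\bigr)^2 d\xi\,ds.
\]
A standard uniform moment bound $\sup_{s\le T,\,x\in\R}\|\sigma(u_s(x))\|_{L^k(P)}\le M$ for the white-noise equation with bounded initial data and Lipschitz $\sigma$, combined with the Gaussian decay in $\widehat{p}_{t-s}$, supplies an integrable dominator (split at $|\xi|\le 1$ versus $|\xi|>1$), and since $(|\xi|^{-(1-\alpha)/2}-1)^2\to 0$ pointwise for $\xi\ne 0$, dominated convergence forces $\sup_y E|II_\alpha|^2\to 0$. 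For $k>2$, a parallel BDG + Minkowski computation (valid because $k\ge 2$ allows interchanging $L^2(x)$ and $L^k(P)$) combined with the same moment bounds extends the conclusion to $L^k(P)$.

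Combining the two estimates and taking $\sup_{t,y}$ of $e^{-2\gamma t}\|\cdot\|_{L^k(P)}^2$, one obtains
\[
\mathcal{N}_{\gamma,k}(u_\alpha-u)^2\le C_1(\gamma,\alpha)\,\mathcal{N}_{\gamma,k}(u_\alpha-u)^2+\varepsilon_\alpha,
\]
with $C_1(\gamma,\alpha)<\tfrac12$ for $\gamma$ large (uniformly in $\alpha$ near $1$) and $\varepsilon_\alpha\to 0$ as $\alpha\uparrow 1$; fixing such a $\gamma$ and absorbing gives the claim. The main obstacle I expect is the $L^k$ treatment of $II_\alpha$ for $k>2$, since one cannot apply Plancherel directly inside the expectation: one must combine BDG, Minkowski, moment estimates on $\sigma(u_s)$, and Lemma~\ref{lem:rieszkernelgaussdistrib}, while keeping every bound uniform in the spatial base point $y$ so that $\varepsilon_\alpha$ survives the $\sup_y$ built into $\mathcal{N}_{\gamma,k}$.
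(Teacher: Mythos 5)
Your proposal is correct in substance and arrives at exactly the same analytic core as the paper, but closes the argument by a genuinely different mechanism. The ingredients coincide: the same coupling through $h_\alpha$, the same add-and-subtract decomposition (your $I_\alpha$/$II_\alpha$ is precisely the paper's $\mathfrak{A}$/$\mathfrak{B}$ split, since the multiplier of $\phi_s \mapsto \phi_s*h_\alpha-\phi_s$ is $|\xi|^{-(1-\alpha)/2}-1$, whose square is the paper's $g_{1-\alpha}-2g_{\frac{1-\alpha}{2}}+1$), the same Fourier identity \eqref{eq:FTtrick}, Lemma~\ref{lem:rieszkernelgaussdistrib} producing the contraction factor $O(\gamma^{\alpha/2-1})$, and the same low/high frequency split with dominated convergence. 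The structural difference is that the paper runs all estimates on Picard iterates and closes via the geometric-series recursion $\mathcal{N}_{\gamma,k}(u_\alpha^{(n+1)}-u^{(n+1)})^k \leq C\gamma^{-k/4}\mathcal{N}_{\gamma,k}(u_\alpha^{(n)}-u^{(n)})^k+\epsilon$, which forces the somewhat awkward treatment of $\mathfrak{D}$ (adding and subtracting $\sigma(u_s)$ and handling $n>n_0$ versus $n\leq n_0$ separately, just to obtain a dominating bound uniform in $n$). You instead work directly with the fixed-point solutions and absorb, which is cleaner: your $II_\alpha$ depends only on the fixed solution $u$, so dominated convergence applies in one shot with no double limit in $(n,\alpha)$. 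What the paper's route buys is that it never needs $\mathcal{N}_{\gamma,k}(u_\alpha-u)<\infty$ a priori; your absorption step does need it, but this is available for $\gamma$ large from the standard theory the paper itself invokes later (\cite[Prop.~9.1]{davarchaoticstochastic}, \cite{davarcbms}), and your coefficient $C_1(\gamma,\alpha)\leq \mathrm{const}\cdot\Gamma(1-\alpha/2)(2\gamma)^{\alpha/2-1}$ is indeed small uniformly for $\alpha$ near $1$, so there is no circularity.

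Two small repairs to your sketch of $II_\alpha$. First, for $k>2$ the order of operations matters: you cannot apply Minkowski first and Plancherel second, since the function $x\mapsto \norm{(\phi_s*h_\alpha-\phi_s)(x)}_{L^k(P)}$ admits no useful Fourier identity; do it in the paper's order --- keep the expectation outside, apply Plancherel pathwise (legitimate because $\phi_s\in L^1(\R)\cap L^2(\R)$ almost surely), bound the multiplier by a constant on $\R\setminus[-1,1]$ so the high-frequency part is dominated by the $\alpha$-independent integrable random variable $\left(\int_0^t\norm{\phi_s}_{L^2(\R)}^2\,ds\right)^{k/2}$ (integrable since $\norm{p_{t-s}}_{L^2(\R)}^2\propto (t-s)^{-1/2}$), and use $\abs{\hat\phi_s(\xi)}\leq\norm{\phi_s}_{L^1(\R)}$ on $[-1,1]$. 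Second, your appeal to ``Gaussian decay in $\hat p_{t-s}$'' is a misstatement: $\phi_s$ is a product in $x$, hence a convolution in $\xi$, and $\hat\phi_s$ inherits no Gaussian decay; but given the first repair this decay is never needed.
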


Take the constant $T$ that appears in Theorem \ref{thm:mainthm} and the definition of the norm $\mathcal{N}_{\gamma,k}$ to be fixed throughout the whole proof. We will start our proof with Picard iterations for both noises $\eta_\alpha$ and $\eta$

\begin{align*}
u^{(n+1)}_t (y) &=  (u_0*p_t)(y) + \int_0^t \int_\R p_{t-s}(x-y) \sigma( u^{(n)}_s) (x) \eta(ds,dx)\\
u^{(n+1)}_{\alpha,t}(y) &= (u_0*p_t)(y) + \int_0^t \int_\R p_{t-s}(x-y) \sigma ( u^{(n)}_{\alpha,s}(x)) \eta_\alpha(ds,dx),
\end{align*}
which is equivalent to the following, thanks to \cite[Sec. 3.2]{davarchaoticstochastic}:
\begin{align*}
\textstyle u^{(n+1)}_t (y) &=  (u_0*p_t)(y) + \int_0^t \int_\R (p_{t-s}(\cdot-y)\sigma( u^{(n)}_s (\cdot))*\delta)(x) \eta(ds,dx)\\
\textstyle u^{(n+1)}_{\alpha,t}(y) &= (u_0*p_t)(y) + \int_0^t \int_\R (p_{t-s}(\cdot-y) \sigma( u^{(n)}_{\alpha,s}(\cdot))*h_\alpha)(x) \eta(ds,dx).
\end{align*}

First, let us estimate the $L^k(P)$ norm of the difference of Picard iterates $u^{(n+1)}_{\alpha,t}(y)- u^{(n+1)}_t (y)$,

\begin{align}\label{eq:firstpicarditerate}
&\expectation \left[\left| u^{(n+1)}_{\alpha,t}(y) - u^{(n+1)}_t (y) \right|^k \right] \\ 
&= \expectation \left[ \left| \int_0^t \int_\R \left( (p_{t-s}(\cdot-y)\sigma( u^{(n)}_{\alpha,s})*h_\alpha)(x) - (p_{t-s}(\cdot-y) \sigma( u^{(n)}_s )*\delta)(x)\right) \eta(ds,dx)   \right|^k \right]. \nonumber
\end{align}
Adding and subtracting the following term, $(p_{t-s}(\cdot-y)\sigma( u^{(n)}_s)*h_\alpha)(x)$ inside the integral and using the inequality $|a-b|^k \leq 2^k |a|^k + 2^k |b|^k$  yields

\begin{align*}
&\expectation \left[\left| u^{(n+1)}_{\alpha,t}(y) - u^{(n+1)}_t (y) \right|^k \right] \\
&\leq 2^k \expectation \left[ \left|\int_0^t \int_\R  (p_{t-s}(\cdot-y) (\sigma(u^{(n)}_{\alpha,s}) - \sigma(u^{(n) }_s))*h_\alpha)(x)  \eta(ds,dx)   \right|^k \right]\\
&~~~+2^k\expectation \left[ \left|\int_0^t \int_\R  (p_{t-s}(\cdot-y) \sigma(u^{(n) }_s)*(h_\alpha-\delta))(x)  \eta(ds,dx)   \right|^k \right].\\
\end{align*}
The next series of steps will be used multiple times throughout this work. First we will use Burkholder-Davis-Gundy inequality and Minkowski integral inequality. Burkholder-Davis-Gundy (BDG) inequality (see for example \cite[Thm. B.1]{davarcbms}) states that for any continuous $L^2$ martingale $M_t$ and $k\geq 2$ we have $\norm{M_t}^2_k \leq 4k \norm{\langle M \rangle_t}_{k/2}$, where $\langle M \rangle_t$ denotes the quadratic variation of $M$. Applying this inequality and evaluating the quadratic variation term \cite[Thm. 5.26]{davarminicourse} on both terms gives us

\begin{multline*}
\expectation \left[\left| u^{(n+1)}_{\alpha,t}(y) - u^{(n+1)}_t (y) \right|^k \right] \\
\leq \mathrm{const}\cdot \expectation\left[ \left( {\int_{[0,t]\times \R^2}  p_{t-s}(x-y)  \mathfrak{v}^{(n)}_s(x,z) f_\alpha(x-z) p_{t-s}(z-y)}  dsdxdz\right)^{k/2} \right]\\
+\mathrm{const} \cdot \expectation \Bigg[ \Bigg( \int_{[0,t]\times \R^2}  p_{t-s}(x-y)(z) \sigma(u_s^{(n)}(x))\left( f_\alpha - 2 h_\alpha + \delta \right)(x-z) \\
 p_{t-s}(z-y)\sigma(u_s^{(n)}(z))  dsdxdz \Bigg)^{k/2} \Bigg] ,
\end{multline*}
where 
\begin{align*}
\mathfrak{v}^{(n)}_s(x,z)= (\sigma(u^{(n)}_{\alpha,s}) - \sigma(u^{(n) }_s))(x)(\sigma( u^{(n)}_{\alpha,s}) - \sigma(u^{(n) }_s))(z).
\end{align*}
Minkowski integral inequality states that $\norm{\int f d\mu}_k \leq \int \norm{f}_kd\mu $ for any $\sigma$-finite measure $\mu$ on $\R^m, m\in \N$ and jointly measurable positive function $f$. We use this inequality on the first term in to order to obtain
\begin{multline*}
\expectation \left[\left| u^{(n+1)}_{\alpha,t}(y) - u^{(n+1)}_t (y) \right|^k \right] \\
\leq \mathrm{const}\cdot \left(\int_{[0,t]\times \R^2}  p_{t-s}(x-y)  v_s^{(n)} (x,z) f_\alpha(x-z) p_{t-s}(z-y)  dsdxdz\right)^{k/2} \\
+\mathrm{const}\cdot  \expectation \Bigg[ \Bigg( \int_{[0,t]\times \R^2}  p_{t-s}(x-y)(z) \sigma(u_s^{(n)}(x))\left( f_\alpha - 2 h_\alpha + \delta \right)(x-z) \\ p_{t-s}(z-y)\sigma(u_s^{(n)}(z))  dsdxdz \Bigg)^{k/2} \Bigg], 
\end{multline*}

\noindent
where $v_s^{(n)}$ denotes 
\begin{align*}
&v_s^{(n)} (x,z) = \expectation\left[\abs{(\sigma(u^{(n)}_{\alpha,s}) - \sigma(u^{(n) }_s))(x)}^{k/2}\abs{(\sigma( u^{\alpha,(n)}_s) - \sigma(u^{(n) }_s))(z)}^{k/2}\right]^{2/k}.\\
\end{align*}
Ultimately, we would like to show that $u_\alpha$ is close to $u$ as $\alpha \uparrow 1$. The original estimate for Picard iterates splits into two terms $\mathfrak{A}$ and $\mathfrak{B}$ where 

\begin{multline*}
\mathfrak{A}=\left(\int_{[0,t]\times \R^2}  p_{t-s}(x-y)  v_s^{(n)} (x,z) f_\alpha(x-z) p_{t-s}(z-y)  dsdxdz\right)^{k/2}\\
\end{multline*}
\begin{multline*}
\mathfrak{B} = \expectation \Bigg[\Bigg( \int_{[0,t]\times \R^2}  p_{t-s}(x-y)(z) \sigma(u_s^{(n)}(x))\left( f_\alpha - 2 h_\alpha + \delta \right)(x-z)  \\
p_{t-s}(z-y)
\sigma(u_s^{(n)}(z))  dsdxdz \Bigg)^{k/2} \Bigg].
\end{multline*}

For term $\mathfrak{A}$ we use Cauchy-Schwarz inequality and take supremum over the term involving expectation, which yields

\begin{multline*}
\mathfrak{A} \leq \Bigg( \int_0^t \sup_{x\in \R} \expectation\left[\abs{(\sigma(u_{\alpha,s}^{(n)}) -\sigma(u_s^{(n)}))(x)}^k\right]^{2/k} \int_{\R^2} 
 p_{t-s}(x-y)  f_\alpha(x-z) p_{t-s}(z-y)  dsdxdz \Bigg)^{k/2}.
\end{multline*}
The following identity holds
\begin{align}\label{eq:FTtrick}
\int_\R \int_\R \varphi(x) f_\alpha(x-y) \varphi(y) dx dy =\int_\R   f_\alpha(x) (\varphi * \tilde{\varphi})(x) dx  = \int_\R g_{1-\alpha} (\xi) |\mathcal{F}\varphi(\xi)|^2 d\xi,
\end{align}
for any $\varphi$ from Schwartz space $\mathcal{S}(\R)$ of rapidly decreasing test functions. This is a consequence of elementary properties of Fourier transform. We can further rewrite $\mathfrak{A}$ using identity \eqref{eq:FTtrick} and the assumption that $\sigma$ is Lipschitz continuous with Lipschitz constant $K$, that is $\abs{\sigma(x)-\sigma(y)} \leq K \abs{x-y}$

\begin{align*}
\mathfrak{A} &\leq \left( \int_0^t \sup_{x\in \R} \expectation\left[\abs{(\sigma(u_{\alpha,s}^{(n)}) -\sigma(u_s^{(n)}))(x)}^k\right]^{2/k}\int_\R g_{1-\alpha}(\xi) \abs{\hat p_{t-s}(\xi)}^2 ds d\xi \right)^{k/2}\\ 
&\leq K^{k}\left( \int_0^t \sup_{x\in \R} \expectation\left[\abs{(u_{\alpha,s}^{(n)} -u_s^{(n)})(x)}^k\right]^{2/k}\int_\R g_{1-\alpha}(\xi) \abs{\hat p_{t-s}(\xi)}^2 ds d\xi \right)^{k/2}.
\end{align*}
Multiply by term $e^{-k\gamma t}$ and obtain $\mathcal{N}_{\gamma,k}$ norm in the estimate
\begin{align*}
e^{-k\gamma t} \mathfrak{A} &\leq K^k\left( \int_0^t  e^{-2\gamma s}\sup_{x\in \R} \expectation\left[\abs{(u_{\alpha,s}^{(n)} -u_s^{(n)})(x)}^k\right]^{2/k}e^{-2\gamma(t-s)}\int_\R g_{1-\alpha}(\xi) \abs{\hat p_{t-s}(\xi)}^2 ds d\xi \right)^{k/2}\\
&\leq K^k\mathcal{N}_{\gamma,k}(u_\alpha^{n}-u^{n})^k\left( \int_0^t  e^{-2\gamma(t-s)}\int_\R g_{1-\alpha}(\xi) \abs{\hat p_{t-s}(\xi)}^2 ds d\xi \right)^{k/2}\\
&\leq K^k\mathcal{N}_{\gamma,k}(u_\alpha^{n}-u^{n})^k\left( \int_0^t e^{-2\gamma (t-s)} \int_\R \frac{1}{\abs{\xi}^{1-\alpha}} e^{-(t-s) \kappa 4 \pi^2 \xi^2} d\xi ds \right)^{k/2}.
\end{align*}
\noindent

Later on, we will see that we can make the integral on right hand side arbitrarily small. The estimate for $\mathfrak{B}$ uses a similar technique as the estimate for $\mathfrak{A}$, but some extra work is required because of the term $(f_\alpha - 2h_\alpha + \delta)$ inside the integral is not a positive function.  Thanks to \cite{dalangmainsrc},\cite[Cor. 3.4]{foondundavar} identity \eqref{eq:FTtrick} extends to a much broader class of functions. We will use this identity to bound term $\mathfrak{B}$. 
Quantity $\sigma(u_s^{(n)}(\cdot))p_{t-s}(\cdot-y) \in L^2(\R)\cap L^1(\R)$ 
almost surely, because 
\begin{align*}
\expectation\left[\norm{\sigma(u_s^{n}(\cdot))p_{t-s}(\cdot-y)}^2_{L^2(\R)}\right]\leq 2K^2(1+\mathcal{N}_{\gamma,2}(u^{(n)})^2)\norm{p_{t-s}(\cdot)}^2_{L^2(\R)} ~,
\end{align*}
for some $\gamma > 1$ and $\mathcal{N}_{\gamma,2}(u^{(n)})$ is uniformly bounded for every $n\in \N$ \cite{davarcbms}. A similar reasoning applies for $\norm{\cdot}_{L^1(\R)}$. We can write
\begin{align*}
&\expectation \left[\left( \int_{[0,t]\times\R^2}  p_{t-s}(x-y) \sigma(u_s^{(n)}(x))\left( f_\alpha - 2 h_\alpha + \delta \right)(x-z)  p_{t-s}(z-y)\sigma(u_s^{(n)}(z))  dsdxdz  \right)^{k/2}\right]\\
&= \expectation \left[\left( \int_{[0,t]\times \R} (g_{1-\alpha} -2 g_{\frac{1-\alpha}{2}} + 1)(\xi) \abs{\mathcal{F}\left( p_{t-s}(\cdot-y)\sigma(u_s^{(n)}(\cdot)) \right)(\xi)}^2  d\xi ds \right)^{k/2}\right].
\end{align*}
Split this integral into two parts, and use inequality $\abs{a-b}^{k/2} \leq 2^{k/2}\abs{a}^{k/2} + 2^{k/2}\abs{b}^{k/2}$ to get
\begin{align*}
&\expectation \left[ \left(\int_{[0,t]\times \R} (g_{1-\alpha} -2 g_{(1-\alpha)/2} + 1)(\xi) \abs{\mathcal{F}\left( p_{t-s}(\cdot-y)\sigma(u_s^{(n)}(\cdot)) \right)(\xi)}^2  d\xi ds \right)^{k/2} \right] \\
&\leq \mathrm{const}\cdot (\mathfrak{C} + \mathfrak{D})~,
\end{align*}
where
\begin{align*}
\mathfrak{C} &= \expectation \left[ \left( \int_{[0,t]}\int_{[-1,1]} (g_{1-\alpha} -2 g_{\frac{1-\alpha}{2}} + 1)(\xi) \abs{\mathcal{F}\left( p_{t-s}(\cdot-y)\sigma(u_s^{(n)}(\cdot)) \right)(\xi)}^2  d\xi ds\right)^{k/2} \right] \\
\mathfrak{D} &= \expectation \left[ \left(\int_{[0,t]}\int_{\R \setminus [-1,1]} (g_{1-\alpha} -2 g_{\frac{1-\alpha}{2}} + 1)(\xi) \abs{\mathcal{F}\left( p_{t-s}(\cdot-y)\sigma(u_s^{(n)}(\cdot)) \right)(\xi)}^2  d\xi ds\right)^{k/2}\right] .
\end{align*}
Properties of Fourier transform and Lipschitz continuity of $\sigma(x)$ give us 
\begin{align}\label{eq:insideft}
&\abs{\mathcal{F}\left( p_{t-s}(\cdot-y)\sigma(u_s^{(n)}(\cdot)) \right)(\xi)}^2
\leq \norm{p_{t-s}(\cdot - y)\sigma(u_s^{(n)}(\cdot))}_{L^1(\R)}^2 \nonumber\\
&\leq K^2\norm{p_{t-s}(\cdot - y)(1+|u_s^{(n)}(\cdot)|)}_{L^1(\R)}^2 
\leq K^2(2 + 2 \norm{p_{t-s}(\cdot - y)u_s^{(n)}(\cdot)}_{L^1(\R)}^2).
\end{align}
for the term inside of $\mathfrak{C}$. 	Splitting the term \eqref{eq:insideft} inside of the integral into two yields
\begin{align*}
\mathfrak{C} &\leq \expectation \left[ \left(\int_{[0,t]}\int_{[-1,1]} (g_{1-\alpha} -2 g_{\frac{1-\alpha}{2}} + 1)(\xi) K^2(2 + 2 \norm{p_{t-s}(\cdot - y)u_s^{(n)}(\cdot)}_{L^1(\R)}^2)  d\xi ds \right)^{k/2}\right]\\
&\leq C_\alpha + \mathrm{const} \cdot\expectation \left[\left( \int_{[0,t]}\int_{[-1,1]} (g_{1-\alpha} -2 g_{\frac{1-\alpha}{2}} + 1)(\xi)  \norm{p_{t-s}(\cdot - y)u_s^{(n)}(\cdot)}_{L^1(\R)}^2  d\xi ds\right)^{k/2} \right] ,
\end{align*}
where $C_\alpha$ denotes 
\begin{align*}
C_\alpha =  \mathrm{const} \left(\int_{[0,t]}\int_{[-1,1]} (g_{1-\alpha} -2 g_{\frac{1-\alpha}{2}} + 1)(\xi)d\xi ds \right)^{k/2}.
\end{align*}
Term $C_\alpha$ can be made as small as we like, due to the dominated convergence theorem.  We use Minkowski integral inequality and get
\begin{align*}
\mathfrak{C}&\leq C_\alpha + \mathrm{const} \left( \int_{[0,t]} \sup_{x\in \R} \expectation\left[\abs{u_s^{(n)}(x)}^k\right]^{2/k}  \int_{[-1,1]} (g_{1-\alpha} -2 g_{\frac{1-\alpha}{2}} + 1)(\xi)    d\xi ds \right)^{k/2}\\
&\leq C_\alpha + \mathrm{const} \left( \int_{[0,t]} e^{-k\gamma s}\sup_{x\in \R} \expectation\left[\abs{u_s^{(n)}(x)}^k\right]^{2/k} e^{k\gamma s} \int_{[-1,1]} (g_{1-\alpha} -2 g_{\frac{1-\alpha}{2}} + 1)(\xi)    d\xi ds \right)^{k/2}\\
&\leq C_\alpha + \mathrm{const}\cdot \mathcal{N}_{\gamma,k}(u^{n})^k \left( \int_{[0,t]}  e^{k\gamma s} \int_{[-1,1]} (g_{1-\alpha} -2 g_{\frac{1-\alpha}{2}} + 1)(\xi)    d\xi ds \right)^{k/2}.
\end{align*}
From the general theory of stochastic partial differential equations \cite{davarcbms} we know that the term $\mathcal{N}_{\gamma,k}(u^k)$ is bounded uniformly for every $k$, for some $\gamma$ sufficiently large. The integral term bounding $\mathfrak{C}$ can be made arbitrarily small, again from the dominated convergence theorem.

All we have left to do is find the estimate for $\mathfrak{D}$. Add and subtract the term $\sigma(u_s(s))$ inside the Fourier transform, split into two integrals and obtain 
\begin{align*}
\mathfrak{D} 
&\leq  \mathrm{const}\cdot \sup_{n>n_0}\expectation \left[\left(  \int_0^t \int_\R \left(p_{t-s}(x-y) \left(\sigma(u_s^{(n)}(x)) - \sigma(u_s(x)) \right)  \right)^2 dx dt \right)^{k/2}\right] \\
&~~+\mathrm{const}\cdot \expectation\left[\left(\int_0^t \int_{\R \setminus [-1,1]} (g_{1-\alpha} -2 g_{\frac{1-\alpha}{2}} + 1)(\xi) \abs{\mathcal{F}\left( p_{t-s}(\cdot-y)\sigma(u_s(\cdot)   \right)(\xi)}^2  d\xi ds \right)^{k/2}\right],  
\end{align*}
for large $n>n_0$. The first integral can be made arbitrarily small, this is from convergence of Pickard's iterations and theory of SPDEs \cite{davarcbms}. We have used Plancherel's theorem and the fact that $(g_{1-\alpha} -2 g_{\frac{1-\alpha}{2}} + 1)$ is bounded by constants on $\R\setminus [-1,1]$, uniformly for all $\alpha \in (0,1)$. The second term can be made arbitrarily small  due to the dominated convergence theorem.  If $n$ is small, that is smaller than some $n_0$, then we need to pick appropriate  $\alpha$ close to one so that the term $\mathfrak{D}$ is arbitrarily close to zero for all $n<n_0$. This again follows from the dominated convergence theorem. The term $(g_{1-\alpha} - 2g_{\frac{1-\alpha}{2}} + 1)$ inside $\mathfrak{D}$ converges pointwise to zero.

The estimate of $e^{-k\gamma t} \mathfrak{A}$ is bounded by the $\mathcal{N}_{\gamma,k}$ norm of the previous Picard iterate. Both terms $\mathfrak{C}$ and $\mathfrak{D}$ can be made smaller than arbitrary $\epsilon$, thus term $\mathfrak{B}$ can be made smaller then arbitrary $\epsilon$. Add up all those estimates and multiply \eqref{eq:firstpicarditerate} by $e^{-k\gamma t}$ to get
\begin{multline*}
e^{-k\gamma t} \expectation \left[\left| u^{(n+1)}_{\alpha,t}(y) - u^{(n+1)}_t (y) \right|^k \right]\\
\leq 
\mathrm{const}\cdot \mathcal{N}_{\gamma,k}(u_\alpha^{(n)} - u^{(n)})^k \left(\int_0^t e^{-2\gamma (t-s)} \int_\R \frac{1}{\abs{\xi}^{1-\alpha}} e^{-(t-s) \kappa 4 \pi^2 \xi^2} d\xi ds \right)^{k/2} + \epsilon~, \\
\end{multline*}
where $\gamma \geq 1$. We use Lemma \ref{lem:rieszkernelgaussdistrib} to evaluate the term inside the integral. A straightforward calculation yields 
\begin{align*}
&e^{-k\gamma t} \expectation \left[\left| u^{(n+1)}_{\alpha,t}(y) - u^{(n+1)}_t (y) \right|^k \right]\\
&\leq \mathrm{const}\cdot \mathcal{N}_{\gamma,k}(u_\alpha^{(n)} - u^{(n)})^k \left(\int_0^t e^{-2\gamma (t-s)} (t-s)^{-\alpha/2} ds \right)^{k/2} + \epsilon\\
&\leq \mathrm{const}\cdot \mathcal{N}_{\gamma,k}(u_\alpha^{(n)} - u^{(n)})^k \left(\int_0^\infty e^{-2\gamma s} s^{-\alpha/2} ds\right)^{k/2} + \epsilon\\
&\leq \mathrm{const}\cdot \mathcal{N}_{\gamma,k}(u_\alpha^{(n)} - u^{(n)})^k \left( \left(\frac{1}{\gamma}\right)^{1-\alpha/2} \right)^{k/2} + \epsilon
\leq \mathrm{const}\cdot \mathcal{N}_{\gamma,k}(u_\alpha^{(n)} - u^{(n)})^k \left(\frac{1}{\gamma}\right)^{k/4} + \epsilon ~.
\end{align*}
We can take supremum over $y\in\R$ and $t \in [0,T]$ to get
\begin{align*}
\mathcal{N}_{\gamma,k} (u_\alpha^{(n+1)} - u^{(n+1)})^k  \leq \mathrm{const} \cdot \mathcal{N}_{\gamma,k}(u_\alpha^{(n)} - u^{(n)})^k \left(\frac{1}{\gamma}\right)^{k/4} + \epsilon ~.
\end{align*}

This defines a convergent geometric series assuming that the coefficient $C\left(\frac{1}{\gamma}\right)^{k/2} < 1$, where $C \equiv \mathrm{const}$. This also implies 
\begin{align*}
\mathcal{N}_{\gamma,k} (u_\alpha - u) \leq \frac{\epsilon}{1-\frac{C}{\gamma^{k/2}}}~,
\end{align*}
for some $\alpha$ close to 1. This concludes the proof of Theorem \ref{thm:nnorm}.

\subsection{Convergence of finite dimensional distributions}

Theorem \ref{thm:nnorm} also states that the solution $u_\alpha$ converges to $u$ in $L^2(P)$ norm for every $t \in [0,T]$ and $x\in \R$. This implies weak convergence of finite dimensional distributions of $u_\alpha$ to finite dimensional distributions of $u$. 
The easiest way to see that is to show convergence in probability for a finite number of pairs $(t_i,x_i) \in [0,T]\times \R$, which implies weak convergence of finite dimensional distribution \cite[pg. 27]{billingsleyconvergence}. The convergence in probability follows from Chebyshev's inequality and convergence in $L^2(P)$ norm.

\subsection{Estimates for Kolmogorov's continuity theorem and Tightness} 
We will prove tightness (and thus weak convergence) from Kolmogorov's continuity theorem. Before we begin the proof, we will need the following two lemmas.
\begin{lemma} \label{lem:spatialdifference} \cite[Lemma 6.4]{davarchaoticstochastic} For all $s>0$ and $x\in \R$
\begin{align*}
\int_\R \abs{ p_t (y-x) - p_t(y)} dy \leq \mathrm{const} \cdot \left( \frac{\abs{x}}{\sqrt{\kappa t }} \wedge 1\right),\\
\end{align*}
where the implied constant does not depend on $(s,x)$.
\end{lemma}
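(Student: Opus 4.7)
The plan is to prove the bound by combining two complementary estimates: a trivial one that is sharp when $\abs{x}/\sqrt{\kappa t}$ is large, and a linear-in-$\abs{x}$ one that is sharp when $\abs{x}/\sqrt{\kappa t}$ is small. For the trivial side, $p_t$ is a probability density, so $\int_\R p_t(y-x)\,dy = \int_\R p_t(y)\,dy = 1$, and the triangle inequality gives $\int_\R \abs{p_t(y-x) - p_t(y)}\,dy \leq 2$. This already yields the factor $1$ in the $\wedge 1$ part of the bound.

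For the linear-in-$\abs{x}$ bound, the key step I would take is to write the spatial difference as a one-dimensional integral of the derivative via the fundamental theorem of calculus: $p_t(y-x) - p_t(y) = -x \int_0^1 (\p_y p_t)(y - ux)\,du$. Taking absolute values, integrating in $y$, and interchanging the $y$ and $u$ integrals by Fubini (the integrand is nonnegative) reduces the estimate to $\abs{x}\int_\R \abs{(\p_y p_t)(z)}\,dz$ after using translation invariance in the inner integral.

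The remaining piece is a direct computation with the explicit Gaussian form. Since $\p_y p_t(z) = -(z/\kappa t)\,p_t(z)$, we have $\int_\R \abs{\p_y p_t(z)}\,dz = (\kappa t)^{-1}\,\expectation[\abs{Z}]$ with $Z$ centered Gaussian of variance $\kappa t$, and the first absolute moment of such a $Z$ is $\sqrt{2\kappa t/\pi}$. This gives $\int_\R \abs{\p_y p_t(z)}\,dz = \sqrt{2/(\pi \kappa t)}$, so the full estimate becomes $\abs{x}\cdot \mathrm{const}/\sqrt{\kappa t}$. Taking the minimum with the trivial bound of $2$ delivers the stated inequality.

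I do not expect a real obstacle here; the argument is a standard one-dimensional Gaussian computation. The only thing that requires a moment of care is verifying that the implied constant depends only on the combination $\abs{x}/\sqrt{\kappa t}$ and not separately on $t$, $x$, or $\kappa$, which is transparent from the explicit calculation above.
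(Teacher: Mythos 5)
Your proof is correct. Note that the paper itself does not prove this lemma at all --- it cites it verbatim as Lemma 6.4 of the reference \cite{davarchaoticstochastic} --- so there is no in-paper argument to compare against; your write-up supplies the standard self-contained proof. Both halves check out: the triangle-inequality bound of $2$ (each translate of $p_t$ integrates to $1$), and the fundamental-theorem-of-calculus representation $p_t(y-x)-p_t(y) = -x\int_0^1 (\p_y p_t)(y-ux)\,du$, followed by Tonelli and translation invariance, reducing matters to $\abs{x}\,\|\p_y p_t\|_{L^1(\R)} = \abs{x}\sqrt{2/(\pi\kappa t)}$, which is exactly the first absolute moment computation you perform. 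Taking the minimum of the two bounds gives the claim with an absolute constant (e.g.\ $2$, since $\min(2,\sqrt{2/\pi}\,r)\leq 2(r\wedge 1)$ for $r=\abs{x}/\sqrt{\kappa t}$). One small phrasing issue: you say the implied constant ``depends only on the combination $\abs{x}/\sqrt{\kappa t}$,'' but what the lemma requires (and what your computation actually shows) is that the constant is absolute, with all dependence on $x$, $t$, $\kappa$ carried by the factor $\abs{x}/\sqrt{\kappa t}\wedge 1$; also, the $s$ appearing in the lemma statement is a typo in the paper for $t$, which your proof implicitly and correctly ignores.
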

\begin{lemma}\label{lem:timedifference}
For all $t,\epsilon>0$ and $x\in \R$  we have
\begin{align*}
\int_\R \abs{ p_{t+\epsilon} (y) - p_t(y)} dy \leq \mathrm{const}\cdot \left(\left( \log(t+\epsilon) - \log(t)\right) \wedge 1\right) .
\end{align*}
\end{lemma}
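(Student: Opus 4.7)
The plan is to write the difference $p_{t+\epsilon}-p_t$ as a time integral of the $s$-derivative and reduce everything to a one-dimensional Gaussian computation. Concretely, since $p_s(y)$ is smooth in $(s,y)\in (0,\infty)\times \R$, I will write
\begin{align*}
p_{t+\epsilon}(y)-p_t(y) = \int_t^{t+\epsilon} \frac{\p}{\p s} p_s(y)\, ds,
\end{align*}
and then apply the triangle inequality (equivalently Minkowski/Tonelli, using $\p_s p_s$ is jointly measurable) to obtain
\begin{align*}
\int_\R |p_{t+\epsilon}(y)-p_t(y)|\, dy \le \int_t^{t+\epsilon}\int_\R \left|\frac{\p}{\p s} p_s(y)\right| dy\, ds.
\end{align*}

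Next I would compute $\p_s p_s$ explicitly. Differentiating the Gaussian definition of $p_s$ gives
\begin{align*}
\frac{\p}{\p s} p_s(y) = \frac{1}{2s}\, p_s(y)\left( \frac{y^2}{\kappa s} -1 \right).
\end{align*}
Making the substitution $u=y/\sqrt{\kappa s}$ turns the inner integral into a scale-free Gaussian integral:
\begin{align*}
\int_\R \left|\frac{\p}{\p s} p_s(y)\right| dy = \frac{1}{2s}\cdot\frac{1}{\sqrt{2\pi}}\int_\R |u^2-1|\, e^{-u^2/2}\, du = \frac{C_0}{s},
\end{align*}
where $C_0$ is an absolute constant (in particular independent of $s$ and $\kappa$).

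Plugging this back into the double integral yields
\begin{align*}
\int_\R |p_{t+\epsilon}(y)-p_t(y)|\, dy \le C_0 \int_t^{t+\epsilon} \frac{ds}{s} = C_0\bigl(\log(t+\epsilon)-\log(t)\bigr).
\end{align*}
To get the $\wedge 1$ part of the claim, I simply combine this with the trivial bound $\int_\R |p_{t+\epsilon}-p_t|\, dy \le \int p_{t+\epsilon} + \int p_t = 2$, which gives $\int_\R |p_{t+\epsilon}-p_t|\, dy \le \mathrm{const}\cdot \bigl((\log(t+\epsilon)-\log(t))\wedge 1\bigr)$, as desired. Note that the statement is $x$-free because $p_t$ is centered; the role that Lemma \ref{lem:spatialdifference} played for spatial increments is here played by the explicit dilation of the Gaussian in the time direction.

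There is no serious obstacle: the only thing to watch is that the constant $C_0$ really is finite and the substitution $u=y/\sqrt{\kappa s}$ removes all $s$ and $\kappa$ dependence except the overall factor $1/s$, which is what produces the logarithm after integrating in $s$.
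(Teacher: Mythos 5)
Your proof is correct and follows essentially the same route as the paper's: both write $p_{t+\epsilon}-p_t=\int_t^{t+\epsilon}\dot p_s\,ds$, bound the $L^1(\R)$ norm of $\dot p_s$ by $\mathrm{const}/s$ (the paper via $\abs{y^2/(\kappa s)-1}\le y^2/(\kappa s)+1$ and the Gaussian second moment, you via the exact substitution $u=y/\sqrt{\kappa s}$), integrate in $s$ to get the logarithm, and cap with the trivial bound $2$ to obtain the $\wedge\,1$.
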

\begin{proof} Direct computation gives us
\begin{align*}
\int_\R &\abs{ p_{t+\epsilon} (y) - p_t(y)} dy = \int_\R \abs{ \int_t^{t+\epsilon} \dot{p}_s(y) ds }dy = \int_\R \abs{ \int_t^{t+\epsilon}\left( -\frac{1}{2t} + \frac{y^2}{2t^2 \kappa} \right)p_s(y) ds }dy\\
&\leq  \int_t^{t+\epsilon} \int_\R \left( \frac{1}{2t} + \frac{y^2}{2t^2 \kappa} \right)p_s(y) dy ds 
= \int_t^{t+\epsilon} \frac{1}{t} dt = \left(\log(t+\epsilon) - \log(t) \right).
\end{align*}
In addition, we have that 
\begin{align*}
\int_\R \abs{ p_{t+\epsilon} (y) - p_t(y)} dy \leq 2.
\end{align*}
\end{proof}

\subsubsection{Difference in the spatial variable}\label{sec:differenceinx}
Let us estimate the difference in the spatial variable

\begin{align*}
\expectation\left[ \abs{u_{\alpha,t}(x) - u_{\alpha,t}(y)}^k \right] = \expectation \left[ \abs{\int_0^t \int_\R \left( p_{t-s}(x-z) - p_{t-s}(y-z) \right) \sigma(u_{\alpha,s}(z))\eta(ds,dz)}^k \right],
\end{align*}
and denote 
\begin{align*}
B(z) &=  \left( p_{t-s}(x-z) - p_{t-s}(y-z) \right)~,\\
A(x,y) &= \sigma(u_{\alpha,s}(x)) \sigma(u_{\alpha,s}(y)).
\end{align*}
We will proceed just as in Section \ref{sec:mainsectionwithproof}. We use BDG, Minkowski integral inequality, Cauchy-Schwarz inequality and take the absolute value inside the integral and get
\begin{align*}
&\expectation\left[ \abs{u_{\alpha,t}(x) - u_{\alpha,t}(y)}^k \right] \\
&\leq \mathrm{const}\cdot \expectation \left[ \abs{\int_0^t \int_\R \int_\R f_\alpha(z-w) B(z)B(w)A(x,y)  dsdzdw } ^{k/2} \right]\\
&\leq \mathrm{const} \abs{\int_0^t \sup_{x\in\R}\norm{\sigma(u_{\alpha,s}(x))}_k^2 \int_\R \int_\R f_\alpha(z-w) \abs{B(z)}\abs{B(w)}  dsdzdw }^{k/2}\\
&\leq \mathrm{const} \abs{\int_0^t \sup_{x\in\R}  \norm{\sigma(u_{\alpha,s}(x))}_k^2 (f_\alpha*p_{t-s})(0) \int_\R  \abs{p_{t-s}(x-z) - p_{t-s}(y-z)}   dsdz }^{k/2}\\
&\leq \mathrm{const} (1 + \mathcal{N}_{\gamma,k}(u_\alpha)^k)\abs{ \int_0^t (f_\alpha*p_{t-s})(0)\int_\R \abs{p_{t-s}(x-z) - p_{t-s}(y-z)} dz ds}^{k/2}\\
&\leq \mathrm{const} (1 + \mathcal{N}_{\gamma,k}(u_\alpha)^k) \abs{ \int_0^t (f_\alpha * p_{t-s} ) (0) \left( \frac{\norm{x-y}}{\sqrt{\kappa t}} \wedge 1  \right)}^{k/2},
\end{align*}
where the last inequality is due to Lemma \ref{lem:spatialdifference}. We also used the fact that 
\begin{align*}
\int_\R f_\alpha(z-w) \left| p_{t-s}(x-w) - p_{t-s}(y-w) \right|dw \leq 2\left(f_\alpha * p_{t-s} \right)(0).
\end{align*}
The inequality $r \wedge 1 \leq r^{2a}$ for $a\in (0,1/2)$ gives us
\begin{multline}\label{eq:spacediffestimate}
\expectation\left[ \abs{u_{\alpha,t}(x) - u_{\alpha,t}(y)}^k \right] \\
\leq \mathrm{const} (1 + \mathcal{N}_{\gamma,k}(u_\alpha)^k) \norm{x-y}^{ak} \abs{\int_0^t (f_\alpha * p_{t-s} ) (0)\cdot (t-s)^{-a} ds  }^{k/2}.
\end{multline}

It remains to show that the integral on the right hand side is bounded for all $\alpha \in (\alpha_0,1)$, $\alpha_0>0$. To show this,  we will need an explicit form of $f_\alpha$. The result is stated in the next lemma.
\begin{lemma}
For every $1>\alpha > \alpha_0 > 0$ we have  
\begin{align*}
f_\alpha*p_{s} (0) \leq \mathrm{const}\cdot s^{-\alpha/2},
\end{align*}
where the constant depends only on our choice of $\alpha_0$.
\end{lemma}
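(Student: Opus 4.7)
The plan is to compute $f_\alpha * p_s(0)$ in closed form by direct integration, and then verify that the $\alpha$-dependent prefactor stays bounded as $\alpha \uparrow 1$. Writing out the convolution at the origin,
\begin{align*}
f_\alpha * p_s(0) = c_{1-\alpha}\int_\R \frac{1}{|y|^\alpha}\,\frac{1}{\sqrt{2\pi\kappa s}}\,e^{-y^2/(2\kappa s)}\,dy,
\end{align*}
so the integral is precisely a negative Gaussian moment of the type handled by Lemma~\ref{lem:rieszkernelgaussdistrib}. Matching exponents by setting $\beta 4\pi^2 = 1/(2\kappa s)$ and $\gamma = \alpha$, the lemma gives $\int_\R |y|^{-\alpha} e^{-y^2/(2\kappa s)} dy = (2\kappa s)^{(1-\alpha)/2}\,\Gamma((1-\alpha)/2)$, and hence
\begin{align*}
f_\alpha * p_s(0) = \frac{c_{1-\alpha}\,(2\kappa)^{(1-\alpha)/2}\,\Gamma((1-\alpha)/2)}{\sqrt{2\pi\kappa}}\; s^{-\alpha/2}.
\end{align*}
This immediately yields the claimed decay rate $s^{-\alpha/2}$; what remains is to show that the constant $C(\alpha)$ multiplying $s^{-\alpha/2}$ is uniformly bounded for $\alpha\in(\alpha_0,1)$.

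The only thing that is not obviously harmless is the behaviour as $\alpha\uparrow 1$, since $\Gamma((1-\alpha)/2)$ blows up like $2/(1-\alpha)$ near $\alpha=1$. Here the definition of $c_{1-\alpha}$ saves us: plugging in,
\begin{align*}
c_{1-\alpha} = 2\,\frac{\sin((1-\alpha)\pi/2)\,\Gamma(\alpha)}{(2\pi)^{\alpha}},
\end{align*}
and $\sin((1-\alpha)\pi/2)\sim (1-\alpha)\pi/2$ as $\alpha\uparrow 1$, so the product $c_{1-\alpha}\,\Gamma((1-\alpha)/2)$ has a finite positive limit (essentially $2\pi/(2\pi) = 1$, up to universal factors). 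Away from $\alpha=1$ and bounded below by $\alpha_0>0$, all factors are continuous in $\alpha$, so they are bounded on the closed interval; combining the two regimes gives $\sup_{\alpha\in(\alpha_0,1)} C(\alpha) <\infty$, which is exactly the statement of the lemma with a constant depending only on $\alpha_0$ (and on $\kappa$).

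The only real obstacle is this cancellation of the $\Gamma$-singularity against the vanishing sine, and once that is pointed out the argument reduces to a one-line application of the previous lemma and a boundedness check on a continuous function of $\alpha$ on $(\alpha_0,1]$.
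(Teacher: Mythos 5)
Your proposal is correct and follows essentially the same route as the paper: write out the convolution at the origin, evaluate the negative Gaussian moment via Lemma~\ref{lem:rieszkernelgaussdistrib}, and show the resulting $\alpha$-dependent constant is bounded on $(\alpha_0,1)$. The only cosmetic difference is that you handle the cancellation of $\Gamma\left(\frac{1-\alpha}{2}\right)$ against $\sin\left(\frac{(1-\alpha)\pi}{2}\right)$ by local asymptotics near $\alpha=1$ plus continuity, whereas the paper invokes Euler's reflection formula, which gives the same cancellation as an exact identity.
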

\begin{proof}
By direct computation and \eqref{eq:negativemoment} we get
\begin{align*}
(f_\alpha*p_{s})(0) = c_{1-\alpha} \int_\R  \frac{1}{\abs{x}^\alpha} p_s(x)  dx = 2 \frac{\sin\left(\frac{(1-\alpha) \pi}{2} \right)\Gamma(\alpha)}{(2\pi)^\alpha} 2^{-\alpha/2} \Gamma\left(\frac{1-\alpha}{2}\right) s^{-\alpha/2}\pi^{-1/2}.
\end{align*}
The boundedness of constant 
\begin{align*}
2 \frac{\sin\left(\frac{(1-\alpha) \pi}{2} \right)\Gamma(\alpha)}{(2\pi)^\alpha} 2^{-\alpha/2} \Gamma\left(\frac{1-\alpha}{2}\right) \pi^{-1/2}
\end{align*}
can be concluded from Euler's reflection formula.
\end{proof}
Because of the lemma above, the integral on the right hand side of \eqref{eq:spacediffestimate} is finite as long as $a\in(0,1/2)$.

\subsubsection{Difference in the time variable} \label{sec:differenceint}
The difference in the time variable is going to be, for $\delta > 0$ 
\begin{multline*}
\expectation\left[ \abs{u_{\alpha,t+\delta}(x) - u_{\alpha,t}(x)}^k \right] \\
=\mathrm{const}\cdot \expectation \left[ \abs{\int_0^t \int_\R \left( p_{t+\delta-s}(x-z) - p_{t-s}(x-z) \right) \sigma(u_{\alpha,s}(z))\eta(ds,dz)}^k \right]\\
+\mathrm{const}\cdot \expectation\left[ \abs{\int_t^{t+\delta} \int_\R p_{t+\delta-s}(x-z)\sigma(u_{\alpha,s}(z))\eta(ds,dz)}^k \right].
\end{multline*}
Let us estimate the second integral, we can use the same technique as in the case of the spatial variable and write 

\begin{align*}
&\expectation\left[ \abs{\int_t^{t+\delta} \int_\R p_{t+\delta-s}(x-z)\sigma(u_{\alpha,s}(z))\eta(ds,dz)}^k \right] \\
&\leq \mathrm{const} \left(
\int_t^{t+\delta} \sup_x \expectation\left[\abs{\sigma(u_{\alpha,s}(x))}^k\right] \int_\R \frac{1}{\abs{\xi}^{1-\alpha}} \abs{\hat p_{t+\delta-s} (\xi)}^2 d\xi ds \right)^{k/2}\\
&\leq \mathrm{const} (1+\mathcal{N}_{\gamma,k}(u_\alpha)^k) \left( \int_t^{t+\delta} (t+\delta-s)^{-\alpha/2} ds\right)^{k/2}\\
&\leq  \mathrm{const} (1+\mathcal{N}_{\gamma,k}(u_\alpha)^k) \abs{\delta}^{k(2-\alpha)/4}\leq  \mathrm{const} (1+\mathcal{N}_{\gamma,k}(u_\alpha)^k) \abs{\delta}^{k/4}.
\end{align*}
The estimate for the second integral will be
\begin{align}
&\expectation \left[ \abs{\int_0^t \int_\R \left( p_{t+\delta-s}(x-z) - p_{t-s}(x-z) \right) \sigma(u_{\alpha,s}(z))\eta(ds,dz)}^k \right] \nonumber\\
&\leq \mathrm{const} \left( \int_0^t \sup_{x\in R}\expectation\left[\abs{\sigma(u_{\alpha,s}(x))}^k\right]^{2/k}  (f_\alpha * p_{t-s})(0)\int_\R \abs{p_{s+\delta}(z) - p_{s}(z)}  dz ds  \right)^{k/2} \nonumber\\
&\leq \mathrm{const} (1+\mathcal{N}_{\gamma,k}(u_\alpha)^k) \left( \int_0^t  (f_\alpha * p_{t-s})(0)\int_\R \abs{p_{s+\delta}(z) - p_{s}(z)}  dz ds  \right)^{k/2} \nonumber \\
&\leq \mathrm{const} (1+\mathcal{N}_{\gamma,k}(u_\alpha)^k) \left( \int_0^t  s^{-1/2} \left(\log(s+\delta)-\log(s)  \right) ds  \right)^{k/2} \nonumber \\
&\leq \mathrm{const} (1+\mathcal{N}_{\gamma,k}(u_\alpha)^k) \left( 4\sqrt{\delta}\operatorname{atan}\left( \sqrt{\frac{t}{\delta}}\right) + 2\sqrt{t}\log(1+\delta/t)\right)^{k/2} ,\label{eq:timedifffirstintegralestimate}
\end{align}
by using a similar technique as in the case for the spatial variable and Lemma \ref{lem:timedifference}. The inequality $\log(1+\zeta) < \sqrt{\zeta}$ for all $\zeta>0$ gives us 
\begin{multline}
\expectation \left[ \abs{\int_0^t \int_\R \left( p_{t+\delta-s}(x-z) - p_{t-s}(x-z) \right) \sigma(u_{\alpha,s}(z))\eta(ds,dz)}^k \right] \\
\leq \mathrm{const} \left(1+\left[\mathcal{N}_{\gamma,k}(u_\alpha)\right]^k\right) \delta^{k/4}. \label{eq:timediffsecondintegralestimate}
\end{multline}
We can combine both estimates \eqref{eq:timedifffirstintegralestimate} and  \eqref{eq:timediffsecondintegralestimate} to finally get 
\begin{align*}
\expectation\left[ \abs{u_{\alpha,t+\delta}(x) - u_{\alpha,t}(x)}^k \right] &=\mathrm{const}\cdot\left(1+\left[\mathcal{N}_{\gamma,k}(u_\alpha)\right]^k\right) \delta^{k/4}~.
\end{align*}

\subsection{Kolmogorov's continuity theorem and tightness} \label{sec:kolmogorovcontinuity}
Let us mention that $N_{k,\gamma}(u_\alpha)$ is finite \cite[Prop. 9.1]{davarchaoticstochastic} for every choice of $\alpha \in (0,1)$. We also know that $u_\alpha$ is continuous in $\mathcal{N}_{\gamma,k}$ norm thanks to Theorem \ref{thm:nnormalpha0}. Those two facts together imply that $\mathcal{N}_{\gamma,k}(u_\alpha)$ is uniformly bounded for $\alpha\in (\alpha_0,1),\alpha_0>0$. We would like to emphasize that we are not creating a circular argument, since the proof of  Theorem \ref{thm:nnormalpha0} solely depends on the proof of Theorem \ref{thm:nnorm}. We have that for every $1>\alpha>\alpha_0>0$ and $(s,x),(t,y)$ from $[0,T]\times[-N,N]\subset \R^+_0 \times \R$ the following holds for $k\geq 2$	

\begin{align*}
\expectation\left[ \abs{u_{\alpha,s}(x) - u_{\alpha,t}(y)}^{k}\right] \leq \mathrm{const} \abs{x-y}^{ka} + \mathrm{const} \abs{t-s}^{k/4},
\end{align*}
where $a\in (0,1/2)$, thanks to our estimates from sections \ref{sec:differenceinx}  and \ref{sec:differenceint}.
Denote $\rho(t,x) = \abs{x}^a + \abs{t}^{1/4}$, then Kolmogorov's continuity theorem states that there is a modification of $u_{\alpha,s}(x)$ such that (see for example \cite[pg. 113]{davarcbms})
\begin{align}\label{eq:kolmogorovcontinuity}
\expectation\left[\sup_{(s,x),(t,y)\in D} \abs{\frac{{u_{\alpha,s}(x) - u_{\alpha,t}(y)}}{\rho(s-t,x-y)^q}}^k \right] < \Lambda < +\infty
\end{align}
for every $\alpha \in (\alpha_0,1)$ and $q\in (0,1-H/k)$ where $H=1/a+4$. By Chebyshev's inequality and  \eqref{eq:kolmogorovcontinuity}, we can write

\begin{align*}
\probab \left\{ \sup_{\substack{(s,x),(t,y)\in D\\ \rho(s-t,x-y) < \delta }} \abs{u_{\alpha,s}(x) - u_{\alpha,t}(y)} > \epsilon\right\} < \frac{\Lambda}{\epsilon^k} \delta^{kq} ~,
\end{align*}
which implies 
\begin{align*}
\lim_{\delta \rightarrow 0} \sup_{\alpha \in (\alpha_0,1)} \probab \left\{ \sup_{\substack{(s,x),(t,y)\in D\\ \rho(s-t,x-y) < \delta }} \abs{u_{\alpha,s}(x) - u_{\alpha,t}(y)} > \epsilon\right\} = 0 ~,
\end{align*}
for every $\epsilon > 0$. We can conclude \cite[Thm. 2]{wichuraWeakConvergence} that the solution $u_\alpha$ converges weakly to $u$ in $\mathcal{C}$.

\section{Continuity in $\alpha$ for $\alpha \in (0,1)$}

Our proof in section \ref{sec:mainsectionwithproof} also implies continuity in $\alpha$ for $\alpha \in (0,1)$. We will only comment on how the proof would change in section \ref{sec:mainsectionwithproof}.

Results in sections \ref{sec:differenceint}, \ref{sec:differenceinx} and \ref{sec:kolmogorovcontinuity} hold without change. The piece of proof that needs to be slightly modified is the proof of Theorem \ref{thm:nnorm}. Recreating Theorem \ref{thm:nnorm} for the new setting would not give us any new technique or insight. Let us simply state the new Theorem.

\begin{theorem}\label{thm:nnormalpha0}
For every $k\geq 2$ and $\alpha_0 \in (0,1)$ we can find $\gamma$ such that
\begin{align*}
\lim_{\alpha \rightarrow \alpha_0} \mathcal{N}_{\gamma,k}(u_\alpha - u_{\alpha_0}) = 0.
\end{align*} 
\end{theorem}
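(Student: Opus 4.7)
\textbf{Proof proposal for Theorem \ref{thm:nnormalpha0}.} The plan is to reproduce, line by line, the proof of Theorem \ref{thm:nnorm}, with the role previously played by the white noise $\eta$ (equivalently, the kernel $h_1 = \delta$) now taken over by the colored noise $\eta_{\alpha_0}$ (with kernel $h_{\alpha_0}$). First, couple $\eta_\alpha$ and $\eta_{\alpha_0}$ to a common underlying white noise $\eta$ via
\[
\eta_\alpha([0,t]\times A) = \int_0^t\!\!\int_\R (\mathbbm{1}_A * h_\alpha)(x)\,\eta(ds,dx),
\]
and analogously for $\eta_{\alpha_0}$. Writing the two Picard schemes as stochastic integrals against the same $\eta$ and adding and subtracting $(p_{t-s}(\cdot-y)\sigma(u^{(n)}_{\alpha_0,s})) * h_\alpha$ in the integrand of the difference, the same manipulation as in Theorem \ref{thm:nnorm} produces two terms $\mathfrak{A}'$ and $\mathfrak{B}'$.

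The term $\mathfrak{A}'$ involves $(\sigma(u^{(n)}_{\alpha,s}) - \sigma(u^{(n)}_{\alpha_0,s})) * h_\alpha$ and is handled identically to $\mathfrak{A}$: BDG, Minkowski, identity \eqref{eq:FTtrick}, Lipschitz continuity of $\sigma$, and Lemma \ref{lem:rieszkernelgaussdistrib} yield a bound of the form $\mathrm{const}\cdot \gamma^{-k/4}\,\mathcal{N}_{\gamma,k}(u^{(n)}_\alpha-u^{(n)}_{\alpha_0})^k$, strictly contractive for $\gamma$ sufficiently large (and for $\alpha$ restricted to a neighborhood of $\alpha_0$ so that the exponent $1-\alpha/2$ stays uniformly bounded away from $0$).

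The essential new ingredient is $\mathfrak{B}'$, whose quadratic variation now produces
\[
f_\alpha - 2\,(h_\alpha * h_{\alpha_0}) + f_{\alpha_0}
\]
rather than $f_\alpha - 2h_\alpha + \delta$. Using $\widehat{h_\alpha * h_{\alpha_0}} = g_{(1-\alpha)/2}\,g_{(1-\alpha_0)/2} = g_{1-(\alpha+\alpha_0)/2}$, Plancherel turns the Fourier-side factor into
\[
g_{1-\alpha}(\xi) - 2\,g_{1-(\alpha+\alpha_0)/2}(\xi) + g_{1-\alpha_0}(\xi),
\]
which vanishes pointwise as $\alpha\to\alpha_0$. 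Exactly as in the split $\mathfrak{C} + \mathfrak{D}$, I would separate the $\xi$-integral into $[-1,1]$ and $\R \setminus [-1,1]$, use the Lipschitz bound on $\sigma$ to peel off an $L^1$-norm of $p_{t-s}$ on the low-frequency piece, and on the high-frequency piece add and subtract $\sigma(u_{\alpha_0,s})$ to exploit Picard convergence for $u_{\alpha_0}$ together with a Plancherel bound against $|\mathcal{F}(p_{t-s}(\cdot-y)\sigma(u_{\alpha_0,s}))|^2$. Both pieces then vanish as $\alpha\to\alpha_0$ by dominated convergence, reproducing the geometric recursion $\mathcal{N}_{\gamma,k}(u_\alpha^{(n+1)} - u_{\alpha_0}^{(n+1)})^k \leq C\,\gamma^{-k/4}\, \mathcal{N}_{\gamma,k}(u_\alpha^{(n)} - u_{\alpha_0}^{(n)})^k + \epsilon$ from which the claim follows on letting $n\to\infty$ and then $\alpha\to\alpha_0$.

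The one step I expect to require non-routine care is finding a uniform integrable majorant for the Fourier-side factor near $\xi = 0$: the three Riesz kernels $g_{1-\alpha}$, $g_{1-(\alpha+\alpha_0)/2}$, $g_{1-\alpha_0}$ all blow up at the origin and do not cancel individually. However, for $\alpha$ restricted to $[\alpha_0-\epsilon_0,\alpha_0+\epsilon_0]$ with any $\epsilon_0 \in (0,\alpha_0)$, each exponent stays strictly below $1$, so the dominator $3\,g_{1-\alpha_0+\epsilon_0}$ is integrable on $[-1,1]$ and dominated convergence applies. This is precisely where the hypothesis $\alpha_0 > 0$ enters, and it is the only place where the argument differs in substance from the $\alpha_0 = 1$ proof in Section \ref{sec:mainsectionwithproof}, in which the limit kernel $\delta$ had to be handled indirectly via Plancherel and the Picard scheme rather than by pointwise Lebesgue integrability.
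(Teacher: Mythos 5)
Your proposal is correct and follows essentially the same route as the paper, which proves Theorem \ref{thm:nnormalpha0} by rerunning the proof of Theorem \ref{thm:nnorm} with $u$ replaced by $u_{\alpha_0}$, the kernel $\left(f_\alpha - 2h_\alpha + \delta\right)$ replaced by $\left(f_\alpha - 2f_{\frac{\alpha+\alpha_0}{2}} + f_{\alpha_0}\right)$ --- your cross term $h_\alpha * h_{\alpha_0}$ is exactly $f_{\frac{\alpha+\alpha_0}{2}}$, as your Fourier computation shows --- and the Fourier-side factor replaced by $\left(g_{1-\alpha} - 2g_{1-\frac{\alpha+\alpha_0}{2}} + g_{1-\alpha_0}\right)$. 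Your closing observation about the uniform integrable majorant $3\,g_{1-\alpha_0+\epsilon_0}$ on $[-1,1]$ supplies a detail the paper's sketch leaves implicit, and it is correct.
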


The proof of Theorem 3 follows the same general direction of the proof of Theorem \ref{thm:nnorm} with the following changes. We need to replace $u_t(x)$ with $u_{\alpha_0,t}(x)$ and change $\left( f_{\alpha} - 2h_\alpha + \delta\right)$ in the estimate for $\mathfrak{B}$ to $\left(f_{\alpha} -2 f_{\frac{\alpha+\alpha_0}{2}}+ f_{\alpha_0}\right) $ and change $\left(g_{1-\alpha} -2g_{\frac{1-\alpha}{2}} + 1\right)$ to 
$\left(g_{1-\alpha} -2g_{1-\frac{\alpha + \alpha_0}{2}} +g_{1-\alpha_0}\right)$.

\section*{Acknowledgement}
This article was submitted in partial fulfillment of the requirements for the degree of Doctor of Philosophy in Mathematics at the University of Utah under the supervision of Professor Davar Khoshnevisan. 
This research was supported in part by the NSFs grant DMS-1307470.

\bibliographystyle{plain}
\bibliography{refs}

\end{document}